\begin{document}
\title{A note on the second moment of automorphic $L$-functions}
\author{H. M. Bui}
\address{Mathematical Institute, University of Oxford, OXFORD, OX1 3LB}
\email{hung.bui@maths.ox.ac.uk}

\begin{abstract}
We obtain the formula for the twisted harmonic second moment of the $L$-functions associated with primitive Hecke eigenforms of weight $2$. A consequence of our mean value theorem is reminiscent of recent results of Conrey and Young on the reciprocity formula for the twisted second moment of Dirichlet $L$-functions. 
\end{abstract}

\maketitle

\begin{section}
{Introduction}
\end{section}

In this paper, we study the twisted second moment of the family of $L$-functions arising from $\mathcal{S}_{2}^{*}(q)$, the set of primitive Hecke eigenforms of weight $2$, lever $q$ ($q$ prime). For $f(z)\in \mathcal{S}_{2}^{*}(q)$, $f$ has a Fourier expansion
\begin{equation*}
f(z)=\sum_{n=1}^{\infty}n^{1/2}\lambda_{f}(n)e(nz),
\end{equation*}
where the normalization is such that $\lambda_{f}(1)=1$. The $L$-function associated to $f$ has an Euler product
\begin{equation*}
L(f,s)=\sum_{n=1}^{\infty}\frac{\lambda_{f}(n)}{n^s}=\bigg(1-\frac{\lambda_{f}(q)}{q^s}\bigg)^{-1}\prod_{\substack{h\ \textrm{prime}\\h\ne q}}\bigg(1-\frac{\lambda_{f}(h)}{h^s}+\frac{1}{h^{2s}}\bigg)^{-1}.
\end{equation*}
The series is absolutely convergent when $\Re s>1$, and admits analytic continuation to all of $\mathbb{C}$. The functional equation for $L(f,s)$ is
\begin{equation*}
\Lambda(f,s):=\bigg(\frac{\sqrt{q}}{2\pi}\bigg)^{s}\Gamma(s+{\scriptstyle{\frac{1}{2}}})L(f,s)=\varepsilon_{f}\Lambda(f,1-s),
\end{equation*}
where $\varepsilon_f=-q^{1/2}\lambda_{f}(q)=\pm1$. We define the harmonic average as
\begin{equation*}
\sum_{f}{\!}^{h}\ A_{f}:=\sum_{f\in \mathcal{S}_{2}^{*}(q)}\frac{A_{f}}{4\pi(f,f)},
\end{equation*}
where $(f,g)$ is the Petersson inner product on the space $\Gamma_{0}(q)\backslash\mathbb{H}$.

We are interested in the twisted second moment of this family of $L$-functions. We define
\begin{equation*}
S(p,q)=\sum_{f\in \mathcal{S}_{2}^{*}(q)}{\!\!\!\!}^{h}\ L(f,{\scriptstyle{\frac{1}{2}}})^2\lambda_{f}(p).
\end{equation*}
Our main theorem is\\

\newtheorem{theorem}{Theorem}\begin{theorem}
Suppose $q$ is prime and $0<p\leq Cq$, for some fixed $C<1$. Then we have
\begin{equation*}
S(p,q)=\frac{d(p)}{\sqrt{p}}\log\frac{q}{4\pi^2p}+O(p^{1/2}q^{-1+\varepsilon}).
\end{equation*} 
\end{theorem}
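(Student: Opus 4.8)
The plan is to realize $S(p,q)$ through an approximate functional equation followed by the Petersson trace formula, isolating a diagonal main term and bounding the Kloosterman off-diagonal. First I would record the approximate functional equation for $L(f,\tfrac12)^2$. Because the sign satisfies $\varepsilon_f^2=1$, the completed square $\Lambda(f,s)^2$ is self-dual and one obtains a symmetric expansion
\[
L(f,\tfrac12)^2 = 2\sum_{m,n\ge1}\frac{\lambda_f(m)\lambda_f(n)}{\sqrt{mn}}\,V\!\left(\frac{4\pi^2 mn}{q}\right),
\]
where $V(x)=\frac{1}{2\pi i}\int_{(2)}G(u)\frac{\Gamma(1+u)^2}{\Gamma(1)^2}\,x^{-u}\frac{du}{u}$, with $G$ an even, rapidly decaying factor normalized by $G(0)=1$, is a smooth cutoff essentially equal to $1$ for $mn\ll q$ and negligible for $mn\gg q^{1+\varepsilon}$. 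Multiplying by $\lambda_f(p)$, using the Hecke relation $\lambda_f(n)\lambda_f(p)=\sum_{d\mid(n,p)}\lambda_f(np/d^2)$ (legitimate since $p\le Cq<q$ forces $(p,q)=1$), and using that for prime $q$ the space $S_2(\Gamma_0(q))$ consists entirely of newforms (as $S_2(\mathrm{SL}_2(\mathbb Z))=0$), I can apply the weight-$2$ Petersson formula
\[
\sum_f{}^h\lambda_f(a)\lambda_f(b)=\delta_{a=b}-2\pi\sum_{q\mid c}\frac{S(a,b;c)}{c}\,J_1\!\left(\frac{4\pi\sqrt{ab}}{c}\right).
\]

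The diagonal $\delta_{m=np/d^2}$ produces the main term. Writing $n=da$ and $p=db$ forces $m=ab$ and $\sqrt{mn}=a\sqrt p$, so the diagonal collapses to $\frac{2d(p)}{\sqrt p}\sum_{a\ge1}\frac1a V(4\pi^2 a^2p/q)$. Expressing $V$ by its Mellin integral turns the $a$-sum into $\frac{1}{2\pi i}\int \frac{G(u)\Gamma(1+u)^2}{u}\zeta(1+2u)\bigl(\tfrac{q}{4\pi^2 p}\bigr)^u\,du$; shifting the contour past the double pole at $u=0$ (the pole of $u^{-1}$ meeting that of $\zeta(1+2u)$) yields $\tfrac12\log\frac{q}{4\pi^2p}$, with the stray Euler constant cancelling against $\Gamma'/\Gamma$ and the residual linear term vanishing because $G$ is even. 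This gives exactly $\frac{d(p)}{\sqrt p}\log\frac{q}{4\pi^2p}$.

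The crux is the off-diagonal, where the Bessel argument $\frac{4\pi\sqrt{mnp}/d}{c}$ is only of size $O(1)$ (since $mn\lesssim q$, $p\lesssim q$, and $c\ge q$), so Weil's bound combined with a trivial treatment of $J_1$ loses, producing a spurious $\log q$. To recover the saving I would open each Kloosterman sum $S(m,np/d^2;c)=\sum_{x(c)}^{*} e(\tfrac{mx+(np/d^2)\bar x}{c})$ and apply Poisson summation in the $m$- and $n$-variables; the smooth weight $V$ together with $J_1$ confines the dual variables to short ranges, while the large modulus $c=q\ell$ supplies the gain. The resulting dual sum is precisely of the reciprocal shape interchanging the roles of $p$ and $q$ — the mechanism behind the Conrey–Young-type reciprocity advertised in the abstract — and estimating it carefully, summing over $\ell\ge1$ against the decay of $J_1$, bounds the entire off-diagonal by $O(p^{1/2}q^{-1+\varepsilon})$. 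Controlling this dual sum, in particular the interplay of the Poisson-dual lengths with the modulus $q$, is the main obstacle; everything else is bookkeeping.
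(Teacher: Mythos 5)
Your setup---approximate functional equation, Hecke multiplicativity, the Petersson formula (valid over all of $\mathcal{S}_2^*(q)$ since there are no oldforms for prime level and weight $2$), and the contour-shift extraction of $\frac{d(p)}{\sqrt p}\log\frac{q}{4\pi^2p}$ from the diagonal---matches the paper's architecture, and your diagonal computation is correct. But the off-diagonal is where essentially all of the content of the theorem lies, and your proposal stops exactly there: you assert that after opening the Kloosterman sums and applying Poisson summation (equivalently, the Voronoi formula for $d(n)$, which is what the paper invokes via Jutila's lemma) the dual sum can be ``estimated carefully'' to give $O(p^{1/2}q^{-1+\varepsilon})$, and you yourself flag this estimation as ``the main obstacle.'' That is a genuine gap, not bookkeeping. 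Note first that your calibration of the difficulty is off: Weil plus $J_1(x)\ll x$ gives roughly $p^{1/2}q^{-1/2+\varepsilon}$ for the moduli $cq$ with $c<q$, so one must save a full power $q^{1/2}$, not merely a $\log q$; only the tail $c\ge q$ succumbs to the trivial bound. After the dual summation the off-diagonal splits into a zero-frequency term (harmless), a $K_0$-Bessel piece carried by the shifted convolution $d(n)d(n+p)$, and an oscillatory $Y_0$-piece carried by $d(n)d(n-p)$, and each needs a separate argument. The $K_0$-piece is precisely where the hypothesis $p\le Cq$ with $C<1$ enters: the terms with $q\mid(p+n)$ contribute on the order of $(q-p)^{-1+\varepsilon}$, which degenerates as $p\to q$. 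Your proposal never identifies where this hypothesis is used, which is a reliable sign that the decisive estimate has not been located. The $Y_0$-piece requires a dyadic partition of the weight, repeated integration by parts against the oscillation of $Y_0$ (the paper's Lemma 7), and a splitting of the dual $n$-sum at a threshold $\rho^{-\kappa}$ with $\kappa=2+\varepsilon/2$ and $J$ large, tuned so that both ranges contribute $O(p^{1/2}q^{-1+\varepsilon})$.

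Separately, your claim that the dual sum is ``precisely of the reciprocal shape interchanging the roles of $p$ and $q$'' is not substantiated and overstates what is established: the paper obtains its reciprocity-type corollary only by bounding $S(q,p)\ll\log p$ trivially, and explicitly remarks that the conjectured cancellation between the off-diagonal terms of $S(p,q)$ and $S(q,p)$ is not understood. Building the proof of the off-diagonal bound on an expected but unproven reciprocity structure would be circular with respect to the paper's stated open problem.
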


\newtheorem{remark}{Remark}\begin{remark}
\emph{The twisted harmonic fourth moment has been considered by Kowalski, Michel and VanderKam [\textbf{\ref{KMV}}], where they gave an asymptotic formula for the fourth power mean value provided that $p\ll q^{1/9-\varepsilon}$.}
\end{remark}

\begin{remark}
\emph{In a similar setting, Iwaniec and Sarnak [\textbf{\ref{IS}}] have given the exact formula for the twisted second moment of the automorphic $L$-functions arising from $\mathcal{H}_{k}(1)$, the set of newforms in $\mathcal{S}_{k}(1)$, where $\mathcal{S}_k(1)$ is the linear space of holomorphic cusp forms of weight $k$. Precisely, they showed that for $k>2$, $k\equiv0(\textrm{mod}\ 2)$, and for any $m\geq1$, we have
\begin{eqnarray*}
&&\frac{12}{k-1}\sum_{f\in\mathcal{H}_{k}(1)}w_fL(f,{\scriptstyle{\frac{1}{2}}})^2\lambda_f(m)=2(1+i^k)\frac{d(m)}{\sqrt{m}}\bigg(\sum_{0<l\leq k/2}\frac{1}{l}-\log2\pi\sqrt{m}\bigg)\nonumber\\
&&\qquad-\frac{2\pi i^k}{\sqrt{m}}\sum_{h\ne m}d(h)d(h-m)p_k\bigg(\frac{h}{m}\bigg)+\frac{2\pi i^k}{\sqrt{m}}\sum_{h}d(h)d(h+m)q_k\bigg(\frac{h}{m}\bigg),
\end{eqnarray*}
where $p_k(x)$ and $q_k(x)$ are Hankel transforms of Bessel functions
\begin{equation*}
p_k(x)=\int_{0}^{\infty}Y_0(y\sqrt{x})J_{k-1}(y)dy,\textrm{ and }q_k(x)=\frac{2}{\pi}\int_{0}^{\infty}K_0(y\sqrt{x})J_{k-1}(y)dy.
\end{equation*}
Here the weight $w_f=\zeta(2)L(\textrm{sym}^2(f),1)^{-1}$, where the symmetric square $L$-function $L(\textrm{sym}^2(f),s)$ corresponding to $f$ is defined by
\begin{equation*}
L(\textrm{sym}^2(f),s)=\zeta(2s)\sum_{n=1}^{\infty}\frac{\lambda_f(n^2)}{n^s}.
\end{equation*}}
\end{remark}

In the context of Dirichlet $L$-functions, consider
\begin{equation*}
M(p,q)=\frac{1}{\varphi^{*}(q)}\sum_{\chi(\textrm{mod}\ q)}{\!\!\!\!\!\!}^{\textstyle{*}}\ |L({\scriptstyle{\frac{1}{2}}},\chi) |^{2}\chi(p),
\end{equation*}
where $\sum^{*}$ denotes summation over all primitive characters $\chi$(mod $q$), and $\varphi^{*}(q)$ is the number of primitive characters. This is the twisted second moment of Dirichlet $L$-functions. In a recent paper, Conrey [\textbf{\ref{C}}] proved that there is a kind of reciprocity formula relating $M(p,q)$ and $M(-q,p)$ when $p$ and $q$ are distinct prime integers. Precisely, Conrey showed that
\begin{equation*}
M(p,q)=\frac{\sqrt{p}}{\sqrt{q}}M(-q,p)+\frac{1}{\sqrt{p}}\bigg(\log\frac{q}{p}+A\bigg)+\frac{B}{2\sqrt{q}}+O\bigg(\frac{p}{q}+\frac{\log q}{q}+\frac{\log pq}{\sqrt{pq}}\bigg),
\end{equation*}
where $A$ and $B$ are some explicit constants. This provides an asymptotic formula for $M(p,q)-\sqrt{p/q}M(-q,p)$ under the condition that $p\ll q^{2/3-\varepsilon}$. The error term above was improved by Young [\textbf{\ref{Y}}] so that the asymptotic formula holds for $p\ll q^{1-\varepsilon}$.

We now take $p$ to be prime and, similarly as before, $S(q,p)$ is defined as the harmonic second moment, twisted by $\lambda_g(q)$, of the family of $L$-functions arising from $g(z)\in\mathcal{S}_{2}^{*}(p)$. We note that as $q$ is prime, the Ramanujan bound $|\lambda_{f}(n)|\leq d(n)$ [\textbf{\ref{D}}] yields
\begin{equation*}
S(q,p)\ll \sum_{g\in \mathcal{S}_{2}^{*}(p)}{\!\!\!\!}^{h}\ L(g,{\scriptstyle{\frac{1}{2}}})^2\ll\log p.
\end{equation*}
Thus as a trivial consequence of Theorem 1, for $p<q$ we have
\begin{equation*}
S(p,q)-\sqrt{p/q}S(q,p)=\frac{2}{\sqrt{p}}\log\frac{q}{4\pi^2p}+O(p^{1/2+\varepsilon}q^{-1/2}).
\end{equation*}
This leads to an asymptotic formula for $S(p,q)-\sqrt{p/q}S(q,p)$, at least for $p$ as large as $q^{1/2-\varepsilon}$. The results in the Dirichlet $L$-functions case [\textbf{\ref{C}},\textbf{\ref{Y}}] suggest that the asymptotic formula should hold for $p\ll q^{\theta}$, for any $\theta<1$. However, our technique fails to extend the range to any power $\theta>1/2$. For that purpose, we need more refined estimates for the off-diagonal terms of $S(p,q)$ and $S(q,p)$. The intricate calculations seem to suggest that there is a large cancellation between these two expressions. The nature of this is not well-understood.

\begin{section}
{Preliminary lemmas}
\end{section}

We require some lemmas. We begin with Hecke's formula for primitive forms.\\

\newtheorem{lemm}{Lemma}\begin{lemm}
For $m,n\geq1$,
\begin{equation*}
\lambda_{f}(m)\lambda_{f}(n)=\sum_{\substack{d|(m,n)\\(d,q)=1}}\lambda_{f}\bigg(\frac{mn}{d^2}\bigg).
\end{equation*}
\end{lemm}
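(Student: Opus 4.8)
The plan is to read the identity straight off the Euler product for $L(f,s)$, which already encodes both the multiplicativity of $\lambda_f$ and its values on prime powers. First I would reduce to prime powers. Writing $F(m,n)=\sum_{d\mid(m,n),\,(d,q)=1}\lambda_f(mn/d^2)$, I claim both $F(m,n)$ and $\lambda_f(m)\lambda_f(n)$ are jointly multiplicative: if $(m_1n_1,m_2n_2)=1$ then $(m_1m_2,n_1n_2)=(m_1,n_1)(m_2,n_2)$, every divisor $d$ of the left side factors uniquely as $d_1d_2$ with $d_i\mid(m_i,n_i)$ and $(d_1,d_2)=1$, the condition $(d,q)=1$ splits accordingly, and multiplicativity of $\lambda_f$ gives $F(m_1m_2,n_1n_2)=F(m_1,n_1)F(m_2,n_2)$; the product $\lambda_f(m)\lambda_f(n)$ factors the same way. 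Hence it suffices to prove, for each prime $h$ and all $a,b\ge0$,
\[
\lambda_f(h^a)\lambda_f(h^b)=\sum_{\substack{0\le j\le\min(a,b)\\(h^j,q)=1}}\lambda_f(h^{a+b-2j}).
\]

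Next I would treat the two shapes of local factor separately. For a good prime $h\ne q$, the factor $(1-\lambda_f(h)h^{-s}+h^{-2s})^{-1}$ gives $\sum_k\lambda_f(h^k)X^k=(1-\lambda_f(h)X+X^2)^{-1}$, hence the recursion $\lambda_f(h)\lambda_f(h^k)=\lambda_f(h^{k+1})+\lambda_f(h^{k-1})$; equivalently $\lambda_f(h^k)=U_k(\tfrac12\lambda_f(h))$ with $U_k$ the Chebyshev polynomials of the second kind. A short induction on $\min(a,b)$ (or the classical product formula $U_aU_b=\sum_{j=0}^{\min(a,b)}U_{a+b-2j}$) then yields the prime-power identity, and since $(h^j,q)=1$ holds automatically the coprimality restriction is vacuous here.

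For the ramified prime $h=q$, the degenerate factor $(1-\lambda_f(q)q^{-s})^{-1}$ forces $\lambda_f(q^k)=\lambda_f(q)^k$, so that $\lambda_f(q^a)\lambda_f(q^b)=\lambda_f(q)^{a+b}=\lambda_f(q^{a+b})$. On the right-hand side the restriction $(h^j,q)=1$ now admits only $j=0$, leaving the single term $\lambda_f(q^{a+b})$, which matches. Recombining the local identities over all primes recovers the stated formula. The one point requiring genuine care — and the only place the primality of $q$ and the coprimality condition truly intervene — is exactly this ramified factor: the restriction $(d,q)=1$ is precisely what suppresses the extra terms that the good-prime recursion would otherwise generate, reflecting the absence of an $h^{-2s}$ term in the Euler factor at the level.
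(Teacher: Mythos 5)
The paper states this lemma without proof, citing it as Hecke's formula for primitive forms, so there is no argument in the text to compare against; your proof is correct and complete. The reduction to prime powers via joint multiplicativity of both sides is sound, the good-prime case is exactly the Chebyshev/Clebsch--Gordan product formula $U_aU_b=\sum_{j=0}^{\min(a,b)}U_{a+b-2j}$ applied to $\lambda_f(h^k)=U_k(\tfrac12\lambda_f(h))$, and you correctly identify that the degenerate Euler factor at $h=q$ forces $\lambda_f(q^k)=\lambda_f(q)^k$, which is precisely what the restriction $(d,q)=1$ accounts for. One minor remark: the primality of $q$ is not really what is being used at the ramified place --- only that $q$ divides the level, so that the local factor is linear rather than quadratic; the same identity holds for any squarefree level with the same coprimality condition.
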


The next lemma is a particular case of Petersson's trace formula.

\begin{lemm}
For $m,n\geq1$, we have
\begin{equation*}
\sum_{f\in \mathcal{S}_{2}^{*}(q)}{\!\!\!\!}^{h}\ \lambda_{f}(m)\lambda_{f}(n)=\delta_{m,n}-J_{q}(m,n),
\end{equation*}
where $\delta_{m,n}$ is the Kronecker symbol and
\begin{equation*}
J_{q}(m,n)=2\pi\sum_{c=1}^{\infty}\frac{S(m,n;cq)}{cq}J_{1}\bigg(\frac{4\pi\sqrt{mn}}{cq}\bigg).
\end{equation*}
Here $J_{1}(x)$ is the Bessel function of order $1$, and $S(m,n;c)$ is the Kloosterman sum
\begin{equation*}
S(m,n;c)=\sum_{a(\emph{mod}\
c)}{\!\!\!\!\!}^{\displaystyle{*}}\ e\bigg(\frac{ma+n\overline{a}}{c}\bigg).
\end{equation*}
Moreover we have
\begin{equation*}
J_{q}(m,n)\ll(m,n,q)^{1/2}(mn)^{1/2+\varepsilon}q^{-3/2}.
\end{equation*}
\end{lemm}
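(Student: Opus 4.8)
The plan is to derive the identity directly from the Petersson trace formula, exploiting the special feature of weight $2$ and prime level. Since there are no nonzero cusp forms of weight $2$ for the full modular group, $\mathcal{S}_{2}(\Gamma_0(1))=\{0\}$, and hence for $q$ prime there are no oldforms: $\mathcal{S}_2^{*}(q)$ is the entire space $\mathcal{S}_2(\Gamma_0(q))$. First I would take an orthogonal Hecke eigenbasis of $\mathcal{S}_2(\Gamma_0(q))$ and apply the classical Petersson formula for weight $k$ and level $q$, in which the Kloosterman moduli run over $c\equiv0\pmod q$. Specializing to $k=2$ gives $\Gamma(k-1)/(4\pi)^{k-1}=1/(4\pi)$ and $i^{-k}=-1$, so the geometric side becomes $\delta_{m,n}$ minus $2\pi\sum_{c}S(m,n;cq)(cq)^{-1}J_1(4\pi\sqrt{mn}/(cq))$. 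With the normalization $f(z)=\sum_n n^{1/2}\lambda_f(n)e(nz)$ the normalized Fourier coefficients equal $\lambda_f(n)$, and the spectral weight $\Gamma(k-1)/((4\pi)^{k-1}(f,f))$ matches exactly the harmonic weight $1/(4\pi(f,f))$; since the $\lambda_f(n)$ are real, this produces the stated identity.

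For the size estimate I would bound $J_q(m,n)$ termwise. Weil's bound gives $|S(m,n;cq)|\le d(cq)(m,n,cq)^{1/2}(cq)^{1/2}$, and I combine this with $J_1(x)\ll\min(x,x^{-1/2})$. In the principal range $cq\ge\sqrt{mn}$ I use $J_1(x)\ll x$, so each term is $\ll d(cq)(m,n,cq)^{1/2}\sqrt{mn}\,(cq)^{-3/2}$. Since $q$ is prime, a quick check on $q$-adic valuations shows $(m,n,cq)\le(m,n,q)(m,n,c)$, so I can pull out the factor $(m,n,q)^{1/2}\sqrt{mn}\,q^{-3/2}$, leaving $\sum_c d(cq)(m,n,c)^{1/2}c^{-3/2}$. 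Writing $c=ab$ with $a\mid(mn)^{\infty}$ and $(b,mn)=1$, and using $d(cq)\ll d(c)$, this sum factors into an Euler product bounded by $\ll(mn)^{\varepsilon}$, which yields precisely $J_q(m,n)\ll(m,n,q)^{1/2}(mn)^{1/2+\varepsilon}q^{-3/2}$.

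It remains to confirm that the complementary range $cq<\sqrt{mn}$, nonempty only when $\sqrt{mn}>q$, does not dominate. There I use the decay bound $J_1(x)\ll x^{-1/2}$, so each term is $\ll d(cq)(m,n,cq)^{1/2}(mn)^{-1/4}$; summing the $O(\sqrt{mn}/q)$ such values gives a contribution $\ll(m,n,q)^{1/2}(mn)^{1/4+\varepsilon}q^{-1}$, which is smaller than the main bound by the factor $q^{1/2}(mn)^{-1/4}<1$ in this regime. The main obstacle is a matter of care rather than depth: because the weight is $2$, the Bessel factor only decays like its argument near the origin, so the $c$-series converges merely like $\sum c^{-2}$ after Weil's bound, and one must keep the two Bessel regimes and the arithmetic factor $(m,n,cq)$ under simultaneous control, the primality of $q$ being exactly what makes the gcd bookkeeping clean.
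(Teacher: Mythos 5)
Your proposal is correct and follows essentially the same route as the paper, which states the identity as a special case of Petersson's trace formula and notes that the bound "follows easily from $J_{1}(x)\ll x$ and Weil's bound"; your observation that $\mathcal{S}_{2}(\Gamma_0(1))=\{0\}$ forces $\mathcal{S}_2^{*}(q)=\mathcal{S}_2(\Gamma_0(q))$ for prime $q$ is exactly the point the paper leaves implicit. Your separate treatment of the range $cq<\sqrt{mn}$ is harmless but unnecessary, since the termwise bound with $J_1(x)\ll x$ already yields a convergent series $\sum_c c^{-3/2+\varepsilon}$ over all $c$.
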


The above estimate follows easily from the bound $J_{1}(x)\ll x$ and Weil's bound on Kloosterman sums.

We mention a result of Jutila [\textbf{\ref{J}}] (cf. Theorem 1.7), which is an extension of the Voronoi summation formula.
\begin{lemm}
Let $f:\mathbb{R}^{+}\rightarrow\mathbb{C}$ be a $C^\infty$ function which vanishes in the neighbourhood of $0$ and is rapidly decreasing at infinity. Then for $c\geq1$ and $(a,c)=1$,
\begin{eqnarray*}
c\sum_{m=1}^{\infty}d(m)e\bigg(\frac{am}{c}\bigg)f(m)&=&2\int_{0}^{\infty}(\log\frac{\sqrt{x}}{c}+\gamma)f(x)dx\nonumber\\
&&\ -2\pi\sum_{m=1}^{\infty}d(m)e\bigg(\frac{-\overline{a}m}{c}\bigg)\int_{0}^{\infty}Y_{0}\bigg(\frac{4\pi\sqrt{mx}}{c}\bigg)f(x)dx\nonumber\\
&&\ +4\sum_{m=1}^{\infty}d(m)e\bigg(\frac{\overline{a}m}{c}\bigg)\int_{0}^{\infty}K_{0}\bigg(\frac{4\pi\sqrt{mx}}{c}\bigg)f(x)dx.
\end{eqnarray*}
\end{lemm}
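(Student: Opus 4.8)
The plan is to prove this by contour integration against the Estermann zeta function, exploiting its functional equation. First I would introduce the Mellin transform $\widetilde{f}(s)=\int_{0}^{\infty}f(x)x^{s-1}\,dx$; since $f$ is smooth, supported away from the origin, and rapidly decreasing, $\widetilde{f}(s)$ is entire and decays faster than any polynomial in vertical strips. By Mellin inversion and absolute convergence for $\Re s>1$,
\[
c\sum_{m=1}^{\infty}d(m)e\Big(\frac{am}{c}\Big)f(m)=\frac{c}{2\pi i}\int_{(\sigma)}\widetilde{f}(s)\,D(s,a/c)\,ds,\qquad \sigma>1,
\]
where $D(s,a/c)=\sum_{m\geq1}d(m)e(am/c)m^{-s}$ is the Estermann zeta function.

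The next step is to recall the analytic theory of $D(s,a/c)$. It continues meromorphically to $\mathbb{C}$ with a single double pole at $s=1$, whose Laurent expansion is governed by the mean value $\sum_{m\leq x}d(m)e(am/c)\sim (x/c)(\log x+2\gamma-1-2\log c)$; explicitly the principal part is $c^{-1}(s-1)^{-2}+2c^{-1}(\gamma-\log c)(s-1)^{-1}$. I would then shift the contour to $\Re s=-\delta$ for small $\delta>0$. The residue at $s=1$ equals $c\,\mathrm{Res}_{s=1}[\widetilde f(s)D(s,a/c)]=\widetilde f'(1)+2(\gamma-\log c)\widetilde f(1)$, and since $\widetilde f(1)=\int_0^\infty f$ and $\widetilde f'(1)=\int_0^\infty f\log$, this is precisely $2\int_0^\infty(\log(\sqrt x/c)+\gamma)f(x)\,dx$, the stated main term. (For $c=1$ this reduces to $D(s,0)=\zeta(s)^2$ and reproduces the classical Voronoi formula for $d(m)$, a useful consistency check.)

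It remains to treat the shifted integral on $\Re s=-\delta$. Here I would invoke Estermann's functional equation, which expresses $D(s,a/c)$ for $\Re s<0$ as $\Gamma(1-s)^2$ times powers of $2\pi c$ times a combination $D(1-s,-\overline a/c)+(\text{trig})\,D(1-s,\overline a/c)$ with trigonometric coefficients built from $\cos\pi s$. Substituting this, expanding the dual Dirichlet series in $1-s$ (now convergent) and interchanging sum and integral yields, for each $m$, a Mellin--Barnes integral whose kernel is $\widetilde f(s)\,\Gamma(1-s)^2$ times a trigonometric factor and a power of $m/c^2$. Using the Mellin transform identities $\int_0^\infty K_0(t)t^{s-1}\,dt=2^{s-2}\Gamma(s/2)^2$ and $\int_0^\infty Y_0(t)t^{s-1}\,dt=\pi^{-1}2^{s-1}\Gamma(s/2)^2\sin(\pi s/2)$, after the reflection $s\mapsto1-s$ one recognizes these kernels as the Mellin transforms of $x\mapsto Y_0(4\pi\sqrt{mx}/c)$ and $x\mapsto K_0(4\pi\sqrt{mx}/c)$. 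Mellin inversion then converts the term attached to $D(1-s,-\overline a/c)$ into $-2\pi\sum_m d(m)e(-\overline a m/c)\int_0^\infty Y_0(4\pi\sqrt{mx}/c)f(x)\,dx$ and the term attached to $D(1-s,\overline a/c)$ into $4\sum_m d(m)e(\overline a m/c)\int_0^\infty K_0(4\pi\sqrt{mx}/c)f(x)\,dx$.

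The main obstacle is bookkeeping the constants and the trigonometric factors precisely enough to split the two dual series cleanly into the $Y_0$ and $K_0$ kernels with coefficients exactly $-2\pi$ and $4$; the duplication and reflection formulas for $\Gamma$ must be applied to collapse the $\Gamma(1-s)^2$ against the Bessel Gamma-quotients, and the $\cos\pi s$ identity distributes a $\sin$ factor to one dual series (reconstructing the oscillatory $Y_0$) and leaves the other trig-free (reconstructing the exponentially decaying $K_0$). A secondary technical point is justifying the contour shift: one needs the convexity/Phragm\'en--Lindel\"of growth bound $D(s,a/c)\ll_{a,c}(1+|s|)^{A}$ in the strip, which against the super-polynomial decay of $\widetilde f(s)$ guarantees that the horizontal segments vanish and that all interchanges of summation and integration are absolutely justified.
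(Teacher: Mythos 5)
The paper contains no proof of this lemma for you to be compared against: it is quoted directly from Jutila's Tata lectures [J, Theorem~1.7], and the text's only role for it is as an imported tool. Your proposal reconstructs the standard argument behind that citation, and it is sound in outline: the Mellin transform $\widetilde f$ is indeed entire with super-polynomial decay, the Laurent data you state for the double pole of $D(s,a/c)$ at $s=1$ (principal part $c^{-1}(s-1)^{-2}+2c^{-1}(\gamma-\log c)(s-1)^{-1}$, consistent with the twisted divisor mean value) is correct and does produce exactly the main term $2\int_0^\infty(\log(\sqrt{x}/c)+\gamma)f(x)\,dx$, and the contour shift is legitimately justified by polynomial growth of $D$ against the decay of $\widetilde f$. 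One pairing in your sketch is backwards, however, and it is exactly the bookkeeping you flagged as the main obstacle. You wrote the functional equation schematically as $D(1-s,-\overline{a}/c)+(\text{trig})\,D(1-s,\overline{a}/c)$, i.e.\ with the trig-free coefficient on the $-\overline{a}$ series; but under the kernel correspondence (after the reflection $u=1-s$) the trig-free factor $\Gamma(u)^2$ inverts to the $K_0$-kernel and $\Gamma(u)^2\cos\pi u$ inverts to $-\pi Y_0$, so your pairing would attach $K_0$ to the phase $e(-\overline{a}m/c)$ --- the opposite of the lemma. The correct form is
\begin{equation*}
D\Big(s,\frac{a}{c}\Big)=2(2\pi)^{2s-2}c^{1-2s}\Gamma(1-s)^2\bigg\{D\Big(1-s,\frac{\overline{a}}{c}\Big)-\cos(\pi s)\,D\Big(1-s,-\frac{\overline{a}}{c}\Big)\bigg\}
\end{equation*}
(one can check the normalization at $c=1$ against $\zeta(s)^2=\chi(s)^2\zeta(1-s)^2$), after which your final assignment --- $e(\overline{a}m/c)$ with $4K_0$ and $e(-\overline{a}m/c)$ with $-2\pi Y_0$ --- comes out exactly as stated. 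A second, minor caution: the forward Mellin transform of $Y_0$ that you quote converges only in a strip lying to the left of the line $\Re u=1+\delta$ on which the dual series converges, so the identification of the $Y_0$-kernel there must be made by analytic continuation or a further contour shift rather than by direct appeal to the transform formula; Jutila handles this point carefully, and your sketch should at least acknowledge it. With those two repairs your argument is a complete and correct proof, essentially identical to the one in the cited source.
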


The next lemma concerns the approximate functional equation for $L$-functions.

\begin{lemm}
Let $G(s)$ be an even entire function satisfying $G(0)=1$ and $G$ has a double zero at each $s\in\mathbb{Z}$. Furthermore let assume that $G(s)\ll_{A,B}(1+|s|)^{-A}$ for any $A>0$ in any strip $-B\leq\Re{s}\leq B$. Then for $f\in \mathcal{S}_{2}^{*}(q)$,
\begin{equation*}
L(f,{\scriptstyle{\frac{1}{2}}})^2=2\sum_{n=1}^{\infty}\frac{d(n)\lambda_{f}(n)}{\sqrt{n}}W_{q}\bigg(\frac{4\pi^{2}n}{q}\bigg),
\end{equation*}
where
\begin{equation*}
W_{q}(x)=\frac{1}{2\pi i}\int_{(1)}G(s)\Gamma(s+1)^{2}\zeta_{q}(2s+1)x^{-s}\frac{ds}{s}.
\end{equation*}
Here $\zeta_q(s)$ is defined by
\begin{equation*}
\zeta_q(s)=\sum_{\substack{n=1\\(n,q)=1}}^{\infty}n^{-s}\qquad(\sigma>1).
\end{equation*}
\end{lemm}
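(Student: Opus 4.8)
The plan is to derive this symmetric approximate functional equation from the functional equation for $\Lambda(f,s)^2$ together with a Dirichlet-series identity supplied by Hecke's relations. First I would record the arithmetic identity
\begin{equation*}
L(f,s)^2=\zeta_q(2s)\sum_{N=1}^{\infty}\frac{d(N)\lambda_f(N)}{N^s},
\end{equation*}
valid for $\Re s>1$. This follows from Lemma 1: expanding $L(f,s)^2=\sum_{m,n}\lambda_f(m)\lambda_f(n)(mn)^{-s}$ and substituting $\lambda_f(m)\lambda_f(n)=\sum_{d\mid(m,n),\,(d,q)=1}\lambda_f(mn/d^2)$, one writes $m=dm'$, $n=dn'$, factors out $\sum_{(d,q)=1}d^{-2s}=\zeta_q(2s)$, and is left with $\sum_{m',n'}\lambda_f(m'n')(m'n')^{-s}=\sum_N d(N)\lambda_f(N)N^{-s}$. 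Absolute convergence for $\Re s>1$ is guaranteed by the Ramanujan bound $|\lambda_f(N)|\le d(N)$.

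Since $\varepsilon_f=\pm1$, squaring $\Lambda(f,s)=\varepsilon_f\Lambda(f,1-s)$ yields the self-dual relation $\Lambda(f,s)^2=\Lambda(f,1-s)^2$, with $\Lambda(f,s)^2$ entire and of polynomial growth on vertical strips. The core step is to evaluate
\begin{equation*}
J:=\frac{1}{2\pi i}\int_{(1)}G(s)\,\Lambda(f,{\textstyle\frac12}+s)^2\,\frac{ds}{s}.
\end{equation*}
Moving the contour from $\Re s=1$ to $\Re s=-1$, the only pole crossed is the simple pole of $1/s$ at $s=0$, of residue $G(0)\Lambda(f,\frac12)^2=\Lambda(f,\frac12)^2$, while the horizontal segments vanish by the decay hypothesis on $G$. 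In the remaining integral over $\Re s=-1$ I substitute $s\mapsto-s$; the measure $ds/s$ is invariant, $G(-s)=G(s)$ since $G$ is even, and $\Lambda(f,\frac12-s)^2=\Lambda(f,\frac12+s)^2$ by self-duality, so that after restoring the upward orientation (which contributes a sign) this integral equals $-J$. Hence $J=\Lambda(f,\frac12)^2-J$, i.e.\ $J=\frac12\Lambda(f,\frac12)^2$.

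Finally I would unfold $J$ into the claimed Dirichlet series. From the definition of $\Lambda(f,s)$ one has $\Lambda(f,\frac12+s)^2=\frac{\sqrt q}{2\pi}(q/4\pi^2)^s\,\Gamma(1+s)^2\,L(f,\frac12+s)^2$; inserting the identity $L(f,\frac12+s)^2=\zeta_q(2s+1)\sum_N d(N)\lambda_f(N)N^{-1/2-s}$, the factor $\zeta_q(2s+1)$ is exactly the one appearing in $W_q$. Interchanging the (absolutely convergent) sum with the integral on $\Re s=1$ and recognising the resulting $s$-integral as $W_q(4\pi^2N/q)$ gives $J=\frac{\sqrt q}{2\pi}\sum_N\frac{d(N)\lambda_f(N)}{\sqrt N}W_q(4\pi^2N/q)$. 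Equating this with $J=\frac12\Lambda(f,\frac12)^2=\frac{\sqrt q}{4\pi}L(f,\frac12)^2$ and cancelling the common factor $\frac{\sqrt q}{4\pi}$ yields the stated formula, the leading $2$ arising from $\frac{\sqrt q}{2\pi}=2\cdot\frac{\sqrt q}{4\pi}$.

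The genuinely routine points are the justification of the contour shift (entirety and polynomial growth of $\Lambda(f,s)^2$, against the super-polynomial decay of $G$) and of the sum--integral interchange, both immediate from the hypotheses. The one place demanding care --- effectively the heart of the argument --- is the self-duality bookkeeping: one must track that $\varepsilon_f^2=1$, the evenness $G(-s)=G(s)$, and the sign from reversing the contour all combine so that the reflected integral is $-J$ rather than $+J$, which is precisely what forces $J=\frac12\Lambda(f,\frac12)^2$.
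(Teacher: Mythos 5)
Your proposal is correct and follows essentially the same route as the paper: the Dirichlet-series identity from Lemma 1, the contour-shift of $\frac{1}{2\pi i}\int_{(1)}G(s)\Lambda(f,\frac12+s)^2\,\frac{ds}{s}$ picking up the residue at $s=0$, the reflection $s\mapsto -s$ combined with $\varepsilon_f^2=1$ and the evenness of $G$ to get $J=\Lambda(f,\frac12)^2-J$, and termwise integration to identify $W_q$. The only cosmetic difference is that the paper normalizes its integral $A(f)$ by dividing out $\frac{\sqrt q}{2\pi}$ at the outset, whereas you carry that constant to the end.
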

\begin{proof}
From Lemma 1 we first note that
\begin{equation*}
L(f,s)^2=\zeta_q(2s)\sum_{n=1}^{\infty}\frac{d(n)\lambda_f(n)}{n^s}\qquad(\sigma>1).
\end{equation*}
Consider
\begin{equation*}
A(f):=\frac{1}{2\pi i}\int_{(1)}\frac{G(s)\Lambda(f,s+{\scriptstyle{\frac{1}{2}}})^2}{\frac{\sqrt{q}}{2\pi}}\frac{ds}{s}.
\end{equation*}
Moving the line of integration to $\Re s=-1$, and applying Cauchy's theorem and the functional equation, we derive that $A(f)=L(f,{\scriptstyle{\frac{1}{2}}})^{2}-A(f)$. Expanding $\Lambda(f,s+{\scriptstyle{\frac{1}{2}}})^2$ in a Dirichlet series and integrating termwise we obtain the lemma.
\end{proof}

For our purpose, $W_q$ is basically a ``cut-off'' function. Indeed, we have the following.
\begin{lemm}
The function $W_q$ satisfies
\begin{equation*}
W_{q}^{(j)}(x)\ll_{j,N}x^{-N}\ \textrm{for }x\geq1\textrm{ and all }j,N\geq0,
\end{equation*}
\begin{equation*}
x^{i}W_{q}^{(j)}(x)\ll_{i,j}|\log x|\ \textrm{for }0<x<1\textrm{ and all }i\geq j\geq0,
\end{equation*}
and
\begin{equation*}
W_{q}(x)=-\bigg(1-\frac{1}{q}\bigg)\frac{\log x}{2}+\frac{\log q}{q}+O_N(x^{N}) \ \textrm{for }0<x<1\textrm{ and all }N\geq0.
\end{equation*}
The implicit constants are independent of $q$.
\end{lemm}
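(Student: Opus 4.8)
The plan is to prove all three estimates by shifting the line of integration in the defining integral for $W_q$, reading off the main terms as residues and bounding the displaced integrals. Throughout I would use that $\zeta_q(2s+1)=\zeta(2s+1)(1-q^{-2s-1})$, that $G(s)$ decays faster than any polynomial in every fixed vertical strip, and that the double zeros of $G$ at the negative integers cancel the double poles of $\Gamma(s+1)^2$ there, so that in the half-plane $\Re s\le 0$ the only singularity of the integrand is the one at $s=0$.

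For the first estimate ($x\ge1$) I would move the contour to $\Re s=N$. Differentiating $j$ times under the integral replaces $x^{-s}/s$ by $(-1)^j s(s+1)\cdots(s+j-1)\,x^{-s-j}/s$, a factor of at most polynomial growth that is absorbed by the rapid decay of $G$. Since no poles lie in $\Re s\ge 1$, since $|x^{-s}|=x^{-N}$ on the new line, and since $\zeta_q(2s+1)$ stays bounded there uniformly in $q$ (because $|1-q^{-2s-1}|\le 2$ for $\Re s\ge 0$), the integral is $\ll_{j,N} x^{-N-j}\le x^{-N}$, with constants independent of $q$.

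For $0<x<1$ the natural move is to the left, and here lies the main obstacle: on a line $\Re s=-N$ the factor $q^{-2s-1}$ has modulus $q^{2N-1}$, which is large, so a crude shift destroys the uniformity in $q$. I would circumvent this by the splitting $\zeta_q(2s+1)=\zeta(2s+1)-q^{-1}\zeta(2s+1)q^{-2s}$, which gives $W_q(x)=V(x)-q^{-1}V(q^2x)$, where $V(y)$ is the analogous integral with $\zeta(2s+1)$ in place of $\zeta_q(2s+1)$. Each copy of $V$ is now $q$-free and can be shifted in the direction dictated by the size of its argument: $V(x)$ always to the left, and $V(q^2x)$ to the left when $q^2x\le 1$, where its residue supplies the $\tfrac{\log q}{q}$ and $(1-\tfrac1q)$ corrections, or to the right when $q^2x>1$, where it is negligible. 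The residue of $V(y)$ at the double pole $s=0$ is computed from $G(s)=1+O(s^2)$, $\Gamma(s+1)^2=1-2\gamma s+O(s^2)$, $y^{-s}=1-s\log y+O(s^2)$ and $\zeta(2s+1)=\tfrac{1}{2s}+\gamma+O(s)$; recombining the two residues reproduces exactly $-(1-\tfrac1q)\tfrac{\log x}{2}+\tfrac{\log q}{q}$, which one may also read off directly from the Laurent expansion $\zeta_q(2s+1)=\tfrac{1-1/q}{2s}+\big(\tfrac{\log q}{q}+\gamma(1-\tfrac1q)\big)+O(s)$.

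For the intermediate bound ($0<x<1$, $i\ge j\ge 0$) the same left shift applies after differentiating. When $j\ge 1$ the differentiation cancels the $1/s$, so the only pole encountered is the simple pole of $\zeta(2s+1)$ at $s=0$, whose residue is a constant multiple of $x^{-j}$; hence $x^iW_q^{(j)}(x)\ll x^{i-j}$, bounded for $i\ge j$, while for $j=0$ the double pole yields the $\log x$ main term. Bounding the residual integrals on the shifted lines by powers of $x$ then gives the stated estimates. The delicate point throughout is the claimed uniformity in $q$ when $x$ is not extremely small, and it is precisely the device of pushing the $q^{-2s}$ factor's contour to the right, rather than to the left, that keeps every implied constant independent of $q$.
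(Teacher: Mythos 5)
Your proof is correct and follows the same contour-shifting argument as the paper, whose own proof is only a two-sentence sketch (differentiate under the integral, shift to $\Re s=N$ for $x\geq1$ and to $\Re s=-N$ for $0<x<1$, and pick up the residue at the double pole $s=0$, which your Laurent expansion of $\zeta_q(2s+1)$ correctly shows equals $-(1-\tfrac{1}{q})\tfrac{\log x}{2}+\tfrac{\log q}{q}$). Your additional device of factoring $\zeta_q(2s+1)=\zeta(2s+1)(1-q^{-2s-1})$, hence $W_q(x)=V(x)-q^{-1}V(q^2x)$, and shifting the contour for the second piece to the right when $q^2x>1$, is a refinement the paper elides but which is genuinely needed to keep the implied constants independent of $q$, since on $\Re s=-N$ the factor $q^{-2s-1}$ has modulus $q^{2N-1}$.
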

\begin{proof}
The first estimate is a direct consequence of Stirling's formula after differentiating under the integral sign and shifting the line of integration to $\Re s=N$. The only difference in the other two estimates is that one has to move the line of integration to $\Re s=-N$.
\end{proof}

\begin{section}
{Proof of Theorem 1}
\end{section}

Our argument in this section follows closely [\textbf{\ref{KM}}]. From Lemma 4 and Lemma 2 we obtain
\begin{equation*}
S(p,q)=2\frac{d(p)}{\sqrt{p}}W_{q}\bigg(\frac{4\pi^2p}{q}\bigg)-2R(p,q),
\end{equation*}
where
\begin{equation*}
R(p,q)=\sum_{n=1}^{\infty}\frac{d(n)}{\sqrt{n}}J_{q}(n,p)W_{q}\bigg(\frac{4\pi^2n}{q}\bigg).
\end{equation*}
Using Lemma 5, the first term is
\begin{equation*}
\frac{d(p)}{\sqrt{p}}\log\frac{q}{4\pi^2 p}+O(p^{-1/2}q^{-1+\varepsilon}+p^{1/2+\varepsilon}q^{-1}).
\end{equation*}
Thus, we are left to consider $R(p,q)$. We have
\begin{equation*}
R(p,q)=2\pi\sum_{n=1}^{\infty}\frac{d(n)}{\sqrt{n}}\sum_{c=1}^{\infty}\frac{S(n,p;cq)}{cq}J_{1}\bigg(\frac{4\pi\sqrt{np}}{cq}\bigg)W_{q}\bigg(\frac{4\pi^2n}{q}\bigg).
\end{equation*}
Using Weil's bound for Kloosterman sums and $J_1(x)\ll x$, the contribution from the terms $c\geq q$ is 
\begin{equation*}
\ll p^{1/2}q^{-3/2}\sum_{n=1}^{\infty}(n,p)^{1/2}d(n)\bigg|W_{q}\bigg(\frac{4\pi^2n}{q}\bigg)\bigg|\sum_{c\geq q}\frac{d(c)}{c^{3/2}}\ll p^{1/2}q^{-1+\varepsilon}.
\end{equation*}
Thus we need to study
\begin{equation*}
\frac{2\pi}{q}\sum_{c<q}\frac{1}{c}\sum_{a(\textrm{mod}\ cq)}{\!\!\!\!\!\!\!}^{*}\ \ e\bigg(\frac{\overline{a}p}{cq}\bigg)\sum_{n=1}^{\infty}d(n)e\bigg(\frac{an}{cq}\bigg)\frac{J_{1}(\frac{4\pi\sqrt{np}}{cq})W_{q}(\frac{4\pi^2n}{q})}{\sqrt{n}}.
\end{equation*}
We fix a $C^{\infty}$ function $\xi:\mathbb{R}^{+}\rightarrow[0,1]$, which satisfies $\xi(x)=0$ for $0\leq x\leq1/2$ and $\xi(x)=1$ for $x\geq1$, and attach the weight $\xi(n)$ to the innermost sum. Using Lemma 3, this is equal to
\begin{eqnarray*}
\frac{4\pi}{q^2}\sum_{c<q}\frac{1}{c^2}S(0,p;cq)\int_{0}^{\infty}(\log\frac{\sqrt{t}}{cq}+\gamma)J_{1}\bigg(\frac{4\pi\sqrt{tp}}{cq}\bigg)W_{q}\bigg(\frac{4\pi^2t}{q}\bigg)\xi(t)\frac{dt}{\sqrt{t}}-Y+K,
\end{eqnarray*}
where
\begin{eqnarray}
Y&=&\frac{4\pi^2}{q^2}\sum_{c<q}\frac{1}{c^2}\sum_{n=1}^{\infty}d(n)S(0,p-n;cq)\nonumber\\
&&\qquad\qquad\int_{0}^{\infty}Y_{0}\bigg(\frac{4\pi\sqrt{nt}}{cq}\bigg)J_{1}\bigg(\frac{4\pi\sqrt{tp}}{cq}\bigg)W_{q}\bigg(\frac{4\pi^2t}{q}\bigg)\xi(t)\frac{dt}{\sqrt{t}}\label{4},
\end{eqnarray}
and
\begin{eqnarray}
K&=&\frac{8\pi}{q^2}\sum_{c<q}\frac{1}{c^2}\sum_{n=1}^{\infty}d(n)S(0,p+n;cq)\nonumber\\
&&\qquad\qquad\int_{0}^{\infty}K_{0}\bigg(\frac{4\pi\sqrt{nt}}{cq}\bigg)J_{1}\bigg(\frac{4\pi\sqrt{tp}}{cq}\bigg)W_{q}\bigg(\frac{4\pi^2t}{q}\bigg)\xi(t)\frac{dt}{\sqrt{t}}.\label{5}
\end{eqnarray}
We will deal with $Y$ and $K$ in the next three lemmas. For the first sum, since $S(0,p;cq)=\mu(q)S(0,p\overline{q};c)$ and $J_{1}(x)\ll x$, this is
\begin{equation*}
\ll p^{1/2}q^{-3}\sum_{c<q}\frac{1}{c^2}\int_{1/2}^{\infty}\bigg|W_{q}\bigg(\frac{4\pi^2t}{q}\bigg)\bigg|(\log tcq)dt\ll p^{1/2}q^{-2+\varepsilon}.
\end{equation*}

\begin{lemm}
For $K$ defined as in \eqref{5}, we have
\begin{equation*}
K\ll p^{1/2}q^{\varepsilon}(q-p)^{-1+\varepsilon}.
\end{equation*}
And hence $K\ll p^{1/2}q^{-1+\varepsilon}$, given that $p\leq Cq$ for some fixed $C<1$.
\end{lemm}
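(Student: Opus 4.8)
The plan is to exploit the multiplicativity of the Ramanujan sum together with the exponential decay of $K_{0}$. Since $q$ is prime and $c<q$ we have $(c,q)=1$, so $S(0,p+n;cq)=S(0,p+n;c)\,S(0,p+n;q)$, where $S(0,m;q)=q-1$ if $q\mid m$ and $S(0,m;q)=-1$ otherwise. I would therefore split the $n$-sum in \eqref{5} according to whether $q\mid p+n$. The generic terms carry the harmless factor $-1$, while the terms with $q\mid p+n$ carry the large factor $q-1$ but are supported on the sparse progression $n\equiv-p\ (\mathrm{mod}\ q)$. As $0<p<q$, the smallest such $n$ is $n=q-p$, and I expect this single term to produce the main contribution and, in particular, the dependence on $(q-p)$.

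The technical heart is a uniform estimate for the inner integral
\[
I(n,c)=\int_{0}^{\infty}K_{0}\Big(\tfrac{4\pi\sqrt{nt}}{cq}\Big)J_{1}\Big(\tfrac{4\pi\sqrt{tp}}{cq}\Big)W_{q}\Big(\tfrac{4\pi^{2}t}{q}\Big)\xi(t)\frac{dt}{\sqrt{t}}.
\]
After the substitution $t=qv^{2}/4\pi^{2}$ this becomes, up to a constant, $\sqrt{q}\int_{0}^{\infty}K_{0}(\alpha v)J_{1}(\beta v)W_{q}(v^{2})\xi(\cdots)\,dv$ with $\alpha=\tfrac{2}{c}\sqrt{n/q}$ and $\beta=\tfrac{2}{c}\sqrt{p/q}$. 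I would use $J_{1}(x)\ll x$, the expansions $K_{0}(x)\ll|\log x|$ near $0$ and $K_{0}(x)\ll e^{-x}$ for large $x$, and---crucially---the fact that Lemma 5 forces $W_{q}(v^{2})$ to be essentially supported on $v\ll1$ (with $W_{q}(v^{2})\ll|\log v|$ there) while $\xi$ truncates $v\gg q^{-1/2}$. Carrying this out yields the two-regime bound
\[
I(n,c)\ll\frac{\sqrt{p}}{c}\,\log\frac{c^{2}q}{n}\quad(n\ll c^{2}q),\qquad I(n,c)\ll\frac{cq\sqrt{p}}{n}\quad(n\gg c^{2}q),
\]
the latter being in fact superpolynomially small once $n\gg c^{2}q^{2}$. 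Establishing these bounds uniformly in $n$ and $c$, and correctly locating where the $K_{0}$-integral switches from its logarithmic to its exponential regime against the cut-off $W_{q}$, is the step I expect to be the main obstacle; the cut-off is essential, as the bare $K_{0}$--$J_{1}$ integral would be substantially larger.

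With these estimates in hand the assembly is routine. For the distinguished term $n=q-p$, $c=1$ one has $S(0,q;q)=q-1$ and $I(q-p,1)\ll\sqrt{p}\,\log\frac{q}{q-p}$, so its contribution to \eqref{5} is
\[
\ll\frac{1}{q^{2}}\,d(q-p)\,(q-1)\,\sqrt{p}\,\log\frac{q}{q-p}\ll\frac{\sqrt{p}}{q}\,q^{\varepsilon}\log\frac{q}{q-p}.
\]
Here the elementary inequality $\frac{q-p}{q}\log\frac{q}{q-p}\ll1$ converts the factor $\tfrac{1}{q}\log\frac{q}{q-p}$ into $(q-p)^{-1}$, which is precisely the source of the claimed dependence. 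All remaining contributions---the special terms $n=kq-p$ with $k\ge2$, the special terms with $c\ge2$, and the generic terms with $q\nmid p+n$---I would bound by inserting $|S(0,p+n;c)|\le(p+n,c)$, applying the decay of $I(n,c)$, and summing via the standard estimates $\sum_{n\le X}d(n)(p+n,c)\ll Xq^{\varepsilon}$ and $\sum_{c<q}c^{-1}\ll q^{\varepsilon}$; each such piece is $\ll\frac{\sqrt{p}}{q}q^{\varepsilon}$, up to the factor $\log\frac{q}{q-p}$ which again occurs only for $n=q-p$ and is absorbed by the same inequality.

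Collecting everything gives $K\ll\frac{\sqrt{p}}{q-p}\,q^{\varepsilon}\ll p^{1/2}q^{\varepsilon}(q-p)^{-1+\varepsilon}$, which is the first assertion. Finally, the hypothesis $p\le Cq$ with $C<1$ forces $q-p\ge(1-C)q$, whence $K\ll p^{1/2}q^{-1+\varepsilon}$, as required.
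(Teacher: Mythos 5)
Your proposal is correct and follows essentially the same route as the paper: factor $S(0,p+n;cq)$ using $(c,q)=1$, bound the $K_0$--$J_1$ integral, split the $n$-sum according to whether $q\mid p+n$, and extract the $(q-p)^{-1}$ from the single term $n=q-p$. The only cosmetic differences are that the paper bounds the integral as $\ll cp^{1/2}q^{1+\varepsilon}n^{-1}e^{-\sqrt{n}/2cq}$ (so the factor $(q-p)^{-1}$ falls out directly from $n^{-1}$ at $n=q-p$ rather than via your inequality $\tfrac{q-p}{q}\log\tfrac{q}{q-p}\ll1$), and it sums over $c$ first via $\sum_{c<q}c^{-1}\sum_{l\mid(p+n,c)}l\ll d(p+n)\log q$.
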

\begin{remark}
\emph{This is the only place where the condition $p\leq Cq$ for some constant $C<1$ is used.}
\end{remark}
\begin{proof}
The integral involving $K_0$, using $K_{0}(y)\ll y^{-1/2}e^{-y}$, is
\begin{eqnarray*}
&&\int_{0}^{\infty}K_{0}\bigg(\frac{4\pi\sqrt{nt}}{cq}\bigg)J_{1}\bigg(\frac{4\pi\sqrt{tp}}{cq}\bigg)W_{q}\bigg(\frac{4\pi^2t}{q}\bigg)\xi(t)\frac{dt}{\sqrt{t}}\nonumber\\
&=&\frac{cq}{2\pi\sqrt{n}}\int_{0}^{\infty}K_{0}(y)J_{1}\bigg(\sqrt{\frac{p}{n}}y\bigg)W_{q}\bigg(\frac{c^2qy^2}{4n}\bigg)\xi\bigg(\frac{c^2q^2y^2}{16\pi^2n}\bigg)dy\nonumber\\
&\ll&\frac{cp^{1/2}q^{1+\varepsilon}}{n}\int_{\sqrt{n}/cq}^{\infty}y^{1/2}e^{-y}dy\ll\frac{cp^{1/2}q^{1+\varepsilon}}{n}e^{-\sqrt{n}/2cq}.
\end{eqnarray*}
Thus, as $S(0,p+n;cq)=S(0,(p+n)\overline{q};c)S(0,p+n;q)$ and $|S(0,(p+n)\overline{q};c)|\leq\sum_{l|(p+n,c)}l$,
\begin{eqnarray*}
K&\ll&p^{1/2}q^{-1+\varepsilon}\sum_{n=1}^{\infty}\frac{d(n)}{n}e^{-\sqrt{n}/2q^2}|S(0,p+n;q)|\sum_{c<q}\frac{\sum_{l|(p+n,c)}l}{c}\nonumber\\
&\ll&p^{1/2}q^{-1+\varepsilon}\sum_{n=1}^{\infty}\frac{d(n)d(p+n)}{n}e^{-\sqrt{n}/2q^2}|S(0,p+n;q)|.
\end{eqnarray*}
We break the sum over $n$ according to whether $q|(p+n)$ or $q\nmid(p+n)$. The contribution of the latter is $O(p^{1/2}q^{-1+\varepsilon})$. That of the former is
\begin{equation*}
\ll p^{1/2}q^{\varepsilon}\sum_{l=1}^{\infty}\frac{d(l)d(ql-p)}{ql-p}e^{-\sqrt{ql-p}/2q^2}\ll p^{1/2}q^{\varepsilon}(q-p)^{-1+\varepsilon}+p^{1/2}q^{-1+\varepsilon}.
\end{equation*}
The lemma follows.
\end{proof}

The case of $Y$ is more complicated as $Y_0$ is an oscillating function. For that we need the following standard lemma (for example, see [\textbf{\ref{KM}}]).

\begin{lemm}
Let $v\geq0$ and $J$ be a positive integer. If $f$ is a compactly supported $C^\infty$ function on $[Y,2Y]$, and there exists $\beta>0$ such that
\begin{equation*}
y^{j}f^{(j)}(y)\ll_{j}(1+\beta Y)^j
\end{equation*}
for $0\leq j\leq J$, then for any $\alpha>1$, we have
\begin{equation*}
\int_{0}^{\infty}Y_{v}(\alpha y)f(y)dy\ll\bigg(\frac{1+\beta Y}{1+\alpha Y}\bigg)^{J}Y.
\end{equation*}
\end{lemm}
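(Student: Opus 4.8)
The plan is to read the exponent $J$ and the ratio $(1+\beta Y)/(1+\alpha Y)$ as the signature of repeated integration by parts: $Y_v(\alpha y)$ oscillates with frequency $\asymp\alpha$ on the support of $f$, while the hypothesis $y^jf^{(j)}(y)\ll_j(1+\beta Y)^j$ says that $f$ varies at frequency $\asymp\beta$, so each integration by parts should win a factor $(1+\beta Y)/(1+\alpha Y)$. The substantive range is $\alpha Y\geq1$, where $Y_v$ is genuinely oscillatory. The key analytic input I would record first is the standard large-argument expansion of the Bessel function: for $x\geq1$ one can write $Y_v(x)=x^{-1/2}\big(e^{ix}U_+(x)+e^{-ix}U_-(x)\big)$, where $U_\pm$ are smooth and satisfy $x^jU_\pm^{(j)}(x)\ll_j1$ for every $j\geq0$.

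In the range $\alpha Y\geq1$ I would insert this expansion and treat the two resulting integrals $\int_0^\infty e^{\pm i\alpha y}g_\pm(y)\,dy$ with $g_\pm(y)=(\alpha y)^{-1/2}U_\pm(\alpha y)f(y)$. Since $f$ is compactly supported all boundary terms vanish, so integrating by parts $J$ times gives $(\mp i\alpha)^{-J}\int_0^\infty e^{\pm i\alpha y}g_\pm^{(J)}(y)\,dy$. On $[Y,2Y]$ the hypothesis yields $f^{(k)}(y)\ll\big((1+\beta Y)/Y\big)^k$, while the amplitude obeys $\big[(\alpha y)^{-1/2}U_\pm(\alpha y)\big]^{(\ell)}\ll(\alpha Y)^{-1/2}Y^{-\ell}$ thanks to the derivative bounds on $U_\pm$. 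Leibniz's rule together with $1+\beta Y\geq1$ then gives $g_\pm^{(J)}(y)\ll(\alpha Y)^{-1/2}\big((1+\beta Y)/Y\big)^J$. Multiplying by $\alpha^{-J}$ and by the measure $\ll Y$ of the support, I obtain a bound $\ll(\alpha Y)^{-1/2}\big((1+\beta Y)/(\alpha Y)\big)^JY$, and since $\alpha Y\geq1$ both $(\alpha Y)^{-1/2}\leq1$ and $\alpha Y\asymp1+\alpha Y$, this is $\ll\big((1+\beta Y)/(1+\alpha Y)\big)^JY$, as required.

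It remains to treat $\alpha Y<1$, where the claimed bound is simply $\asymp(1+\beta Y)^JY\geq Y$. Here $\alpha y$ stays in a bounded interval on the support, $Y_v$ is no longer oscillatory, and I would bound the integral directly from the small-argument behaviour of $Y_v$ (for the case $v=0$ needed in Section 3 this is immediate, up to the elementary estimate for $\int|Y_0|$). I expect the main obstacle to be neither the algebra of the integration by parts nor the trivial range, but the careful bookkeeping that keeps every implied constant independent of $Y$, $\alpha$ and $\beta$: one must establish the uniform-in-$j$ amplitude bounds $x^jU_\pm^{(j)}(x)\ll_j1$, verify that the loss from differentiating $f$ never exceeds $(1+\beta Y)/Y$ per derivative, and glue together the large- and small-argument descriptions of $Y_v$ across the transition region $\alpha y\asymp1$.
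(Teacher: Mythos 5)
The paper offers no proof of this lemma: it is quoted as ``standard'' and attributed to [\textbf{\ref{KM}}], so there is no internal argument to compare yours against. Your route is the standard one for such statements (and essentially the one in the cited reference): in the oscillatory range $\alpha Y\geq1$ the representation $Y_v(x)=x^{-1/2}\big(e^{ix}U_+(x)+e^{-ix}U_-(x)\big)$ with $x^{j}U_{\pm}^{(j)}(x)\ll_{j}1$ for $x\geq1$ is legitimate (it comes from the Hankel-function asymptotics, uniformly for $x\geq1$ with constants depending on $v$ and $j$), your $J$-fold integration by parts has no boundary terms, the Leibniz estimate $g_{\pm}^{(J)}(y)\ll(\alpha Y)^{-1/2}\big((1+\beta Y)/Y\big)^{J}$ is correct, and the final conversion using $\alpha Y\asymp 1+\alpha Y$ and $(\alpha Y)^{-1/2}\leq1$ delivers exactly the stated bound. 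That part of your argument is complete and correct.

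The one genuine gap is the range $\alpha Y<1$, which you dismiss as immediate; it is not, and in fact the lemma as literally stated fails there. Near the origin $Y_{0}(x)\asymp\log(1/x)$ and $Y_{v}(x)\asymp x^{-v}$ for $v\geq1$, both of fixed sign, so for a nonnegative bump $f\asymp1$ on $[Y,2Y]$ with $\beta Y\ll1$ there is no cancellation and the integral has size $Y\log\big(1/(\alpha Y)\big)$ (respectively $Y(\alpha Y)^{-v}$), which exceeds the asserted $O(Y)$ once $\alpha Y$ is small. The statement must therefore be read with the implicit restriction $\alpha Y\gg1$, or else restated with the corresponding extra factor. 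This does not damage the paper: in the proof of Lemma 8 the bound \eqref{8} is only used for $n>\rho^{-\kappa}$ with $\kappa>2$, where $\alpha Y=2\pi\sqrt{n}\rho\geq2\pi$, so only the oscillatory range you treated rigorously is ever invoked. You should either add the hypothesis $\alpha Y\geq1$ to your proof (and note that this suffices for the application) or work out the correct, weaker trivial bound in the complementary range rather than asserting it.
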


\begin{lemm}
For $Y$ defined as in \eqref{4}, we have
\begin{equation*}
Y\ll p^{1/2}q^{-1+\varepsilon}.
\end{equation*}
\end{lemm}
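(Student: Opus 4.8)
The quantity $Y$ from \eqref{4} involves the integral
\begin{equation*}
I_n=\int_{0}^{\infty}Y_{0}\bigg(\frac{4\pi\sqrt{nt}}{cq}\bigg)J_{1}\bigg(\frac{4\pi\sqrt{tp}}{cq}\bigg)W_{q}\bigg(\frac{4\pi^2t}{q}\bigg)\xi(t)\frac{dt}{\sqrt{t}},
\end{equation*}
weighted against $d(n)S(0,p-n;cq)$ and summed over $n$ and $c<q$. The essential feature distinguishing this from the $K$-case is that $Y_0$ oscillates rather than decays, so we cannot win merely by crude absolute bounds; instead the oscillation must be exploited through Lemma~7. First I would substitute $y=4\pi\sqrt{nt}/cq$ to recast $I_n$ as an integral of the form $\int_0^\infty Y_0(y)\,g(y)\,dy$, where $g$ collects the remaining factors $J_1(\sqrt{p/n}\,y)$, $W_q(c^2qy^2/4n)$, $\xi(\cdots)$ and the Jacobian, exactly as was done for $K_0$ in the proof of Lemma~6. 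The role of the cut-off $W_q$, via the rapid decay in Lemma~5, is to confine $t$ to the range $t\ll q^{1+\varepsilon}$, hence to localize $y$ to a dyadic window $[Y,2Y]$ with $Y$ of controlled size; a smooth dyadic partition of unity lets me assume $g$ is supported on such a window.

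**The key steps.** The heart is to verify that $g$ satisfies the derivative hypothesis $y^j g^{(j)}(y)\ll_j(1+\beta Y)^j$ of Lemma~7 with an appropriate $\beta$, and to identify $\alpha$. The factor $J_1(\sqrt{p/n}\,y)$ contributes oscillation with $\alpha$-parameter comparable to $\sqrt{p/n}$, while the genuine oscillation to be harvested from $Y_0(y)$ has $\alpha=1$; the competition between these two frequencies is what controls the gain. Bounding the derivatives of $W_q(c^2qy^2/4n)$ uses the estimates of Lemma~5 (both the decay for large argument and the $|\log x|$ bound for small argument), and the derivatives of the Bessel and cut-off factors are standard. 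Once $g$ is shown to obey the hypothesis with $\beta$ of size roughly $\max(1,\sqrt{p/n})$, Lemma~7 yields $I_n\ll((1+\beta Y)/(1+\alpha Y))^J\,Y$ for every $J$, which produces an essentially arbitrary saving whenever the two frequencies are not too close, i.e.\ away from the stationary-phase range $n\approx p$. I would then insert this bound back, handle the Kloosterman sum $S(0,p-n;cq)$ through its factorization $S(0,(p-n)\overline{q};c)S(0,p-n;q)$ and the divisor bound $|S(0,(p-n)\overline q;c)|\le\sum_{l\mid(p-n,c)}l$ as in Lemma~6, and sum over $c<q$ and $n$.

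**The main obstacle.** The delicate point is the range $n\approx p$, where the two oscillatory frequencies $\sqrt{p/n}$ and $1$ nearly coincide and Lemma~7 gives no saving; here $I_n$ can be as large as $Y\sim\sqrt{np}/cq$, and the sum over such $n$ must be estimated by absolute values. I expect this near-diagonal band $|n-p|\ll p^{1+\varepsilon}/(\text{something})$, together with the term $n=p$ where the Kloosterman sum degenerates to a Ramanujan sum, to dominate and to furnish precisely the bound $p^{1/2}q^{-1+\varepsilon}$ claimed; the rest of the $n$-sum should contribute a negligible amount by the rapid decay from Lemma~7. Controlling the width of this near-diagonal contribution, and checking that its total does not exceed $p^{1/2}q^{-1+\varepsilon}$ after the $c$-summation, is the step I anticipate requiring the most care.
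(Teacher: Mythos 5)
Your framework---dyadic partition of $\xi$, rescaling the integral, applying Lemma 7, and factoring the sums $S(0,p-n;cq)$---is the same as the paper's, but you have misidentified where Lemma 7 fails to save, and that gap is fatal to the argument as proposed. The ratio $((1+\beta Y)/(1+\alpha Y))^{J}$ is invariant under rescaling of the integration variable, so in any normalisation it equals $((1+\sqrt{p}\rho)/(1+\sqrt{n}\rho))^{J}$ with $\rho=2\sqrt{X}/cq$ (note also that taking $\beta=\max(1,\sqrt{p/n})$ instead of $\beta=\sqrt{p/n}$ already kills any saving for $n>p$, and that Lemma 7 requires $\alpha>1$, which your choice $\alpha=1$ does not meet---this is why the paper substitutes $x=2\sqrt{t}/cq$ so that $\alpha=2\pi\sqrt{n}$). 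The point is that this ratio is small only when $\sqrt{n}\rho\gg1$, i.e.\ $n\gg(cq)^{2}/X$: below that threshold the argument of $Y_{0}$ stays bounded on the whole support of the integrand, there is no oscillation to exploit, and Lemma 7 gives nothing, regardless of whether $n$ is near $p$. Since $c$ ranges up to $q$ and $X$ down to $1$, this non-oscillatory range reaches $n\approx q^{4}$, so your claim that outside a narrow band $n\approx p$ ``the rest of the $n$-sum should contribute a negligible amount by the rapid decay from Lemma 7'' is false, and there is no stationary-phase phenomenon at $n\approx p$ to isolate.

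The bulk of the paper's proof is devoted precisely to this wide non-oscillatory range (the sum $Y_{1}$ over $n\le\rho^{-\kappa}$ with $\kappa=2+\varepsilon/2$), which your proposal does not address. There one uses the trivial bound $y(n)\ll\sqrt{p}X(\log q)^{2}/cq$---the factor $\sqrt{p}$ extracted from $J_{1}(x)\ll x$ being the source of the $p^{1/2}$ in the final answer---and then, crucially, interchanges the sums over $c$ and $n$: a given $n\le\rho^{-\kappa}$ occurs only for $c\ge2\sqrt{X}n^{1/\kappa}/q$, and this restriction of the $\sum_{c}c^{-3}$ sum produces a factor $n^{-2/\kappa}$ which, with $\kappa$ just above $2$, barely makes $\sum_{n}d(n)d(p-n)n^{-2/\kappa}|S(0,p-n;q)|$ acceptable and yields $Y_{1}\ll p^{1/2}q^{2\kappa-5+\varepsilon}=p^{1/2}q^{-1+\varepsilon}$. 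Without this device the non-oscillatory range cannot be summed to the claimed bound. Two smaller omissions: the dyadic blocks with $X>q$ require the decay $W_{q}(x)\ll x^{-2}$ to make the sum over $X$ converge; and $S(0,p-n;cq)$ is a Ramanujan sum for every $n$, whose large values on $q\mid p-n$ must be tracked throughout, not only at $n=p$.
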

\begin{proof} We have
\begin{equation}\label{6}
Y=\frac{4\pi^2}{q^2}\sum_{c<q}\frac{1}{c^2}\sum_{n=1}^{\infty}d(n)S(0,p-n;cq)y(n),
\end{equation}
where
\begin{equation}\label{7}
y(n)=\int_{0}^{\infty}Y_{0}\bigg(\frac{4\pi\sqrt{nt}}{cq}\bigg)J_{1}\bigg(\frac{4\pi\sqrt{tp}}{cq}\bigg)W_{q}\bigg(\frac{4\pi^2t}{q}\bigg)\xi(t)\frac{dt}{\sqrt{t}}.
\end{equation}
We make a smooth dyadic partition of unity that $\xi=\sum_{k}\xi_{k}$, where each $\xi_k$ is a compactly supported $C^\infty$ function on the dyadic interval $[X_k,2X_k]$. Moreover, $\xi_k$ satisfies $x^{j}\xi_{k}^{(j)}(x)\ll1$, for all $j\geq0$. We work on each $\xi_k$ individually, but we write $\xi$ instead of $\xi_k$ and, accordingly, $X$ rather than $X_k$.

By the change of variable $x:=2\sqrt{t}/cq$, we have
\begin{equation*}
y(n)=cq\int_{0}^{\infty}Y_{0}(2\pi\sqrt{n}x)J_{1}(2\pi\sqrt{p}x)W_{q}(\pi^2c^2qx^2)\xi\bigg(\frac{c^2q^2x^2}{4}\bigg)dx.
\end{equation*}
We define
\begin{equation*}
f(x):=J_{1}(2\pi\sqrt{p}x)W_{q}(\pi^2c^2qx^2)\xi\bigg(\frac{c^2q^2x^2}{4}\bigg).
\end{equation*}
This is a $C^\infty$ function compactly supported on $[\rho,2\rho]$, where $\rho=2\sqrt{X}/cq$.

We first treat the case $1/2\leq X\leq q$. We note that this involves $O(\log q)$ dyadic intervals. From Lemma 5 we have $x^{j}W^{(j)}(x)\ll_{j}\log q$ for $1/q\ll x\ll 1$. This, together with the recurrence relation $(x^{v}J_{v}(x))'=x^{v}J_{v-1}(x)$, gives
\begin{equation}\label{12}
x^{j}f^{(j)}(x)\ll_{j}(1+\sqrt{p}x)^{j}\log q.
\end{equation}
We are in a position to apply Lemma 7 to $f$ with $\alpha=2\pi\sqrt{n}$, $\beta=\sqrt{p}$ and $Y=\rho=2\sqrt{X}/cq$. The lemma yields, for any positive integer $J$,
\begin{equation}\label{8}
y(n)\ll cq\rho\bigg(\frac{1+\sqrt{p}\rho}{1+\sqrt{n}\rho}\bigg)^{J}\log q.
\end{equation}

Later, we will break the sum over $n$ in \eqref{6} in the following way
\begin{equation*}
\sum_{n\geq1}=\sum_{n\leq\rho^{-\kappa}}+\sum_{n>\rho^{-\kappa}},
\end{equation*}
where $\kappa>2$ will be chosen later. The estimate \eqref{8} will be used for $n>\rho^{-\kappa}$. We need another estimate for the range $n\leq\rho^{-\kappa}$. For this we go back to \eqref{7}, using $Y_{0}(x)\ll1+|\log x|$ and $J_{1}(x)\ll x$, to derive
\begin{equation}\label{9}
y(n)\ll\frac{\sqrt{p}X}{cq}(\log q)^2.
\end{equation}

We denote by $Y_1$ and $Y_2$ the corresponding splitted sums ($Y=Y_{1}+Y_{2}$). For the first sum, using \eqref{9}, we have
\begin{eqnarray}
Y_1&=&\frac{4\pi^2}{q^2}\sum_{c<q}\frac{1}{c^2}\sum_{n\leq\rho^{-\kappa}}d(n)S(0,p-n;cq)y(n)\nonumber\\
&\ll&p^{1/2}Xq^{-3+\varepsilon}\sum_{n\leq\rho^{-\kappa}}d(n)|S(0,p-n;q)|\sum_{c<q}\frac{1}{c^3}\sum_{l|(p-n,c)}l\nonumber\\
&\ll&p^{1/2}Xq^{-3+\varepsilon}\sum_{n\leq(\frac{q^2}{2\sqrt{X}})^{\kappa}}d(n)|S(0,p-n;q)|\sum_{\frac{2\sqrt{X}}{q}n^{1/\kappa}\leq c<q}\sum_{l|(p-n,c)}\frac{l}{c^3}\nonumber\\
&\ll&p^{1/2}q^{-1+\varepsilon}\sum_{n\leq(\frac{q^2}{2\sqrt{X}})^{\kappa}}\frac{d(n)d(p-n)}{n^{2/\kappa}}|S(0,p-n;q)|\nonumber\\
&\ll&p^{1/2}q^{2\kappa-5+\varepsilon}.\label{10}
\end{eqnarray}

For the second sum, we note that $\sqrt{p}\rho\ll1$ in this range. Using \eqref{8}, we have 
\begin{equation*}
y(n)\ll\sqrt{X}(\log q)n^{-J/2}\rho^{-J}.
\end{equation*}
Similarly to above, we deduce that
\begin{eqnarray}
Y_2&=&\frac{4\pi^2}{q^2}\sum_{c<q}\frac{1}{c^2}\sum_{n>\rho^{-\kappa}}d(n)S(0,p-n;cq)y(n)\nonumber\\
&\ll&\sqrt{X}q^{-2+\varepsilon}\sum_{n>\rho^{-\kappa}}\frac{d(n)}{n^{J/2}}|S(0,p-n;q)|\sum_{c<q}\frac{1}{c^2}\sum_{l|(p-n,c)}l\rho^{-J}\nonumber\\
&\ll&X^{-(J-1)/2}q^{J-2}\sum_{n}\frac{d(n)}{n^{J/2}}|S(0,p-n;q)|\sum_{l|p-n}l^{J-1}\sum_{c\leq\frac{2\sqrt{X}}{ql}n^{1/\kappa}}c^{J-2}\nonumber\\
&\ll&q^{-1+\varepsilon}\sum_{n}\frac{d(n)d(p-n)}{n^{J/2-(J-1)/\kappa}}|S(0,p-n;q)|.\label{11}
\end{eqnarray}

To this end, we choose $\kappa=2+\varepsilon/2$ and $J$ large enough so that $J/2-(J-1)/\kappa>1$. We hence obtain $Y_{1}\ll p^{1/2}q^{-1+\varepsilon}$ and, since the sum over $n$ in \eqref{11} converges, $Y_{2}\ll q^{-1+\varepsilon}$.

For $X>q$, similarly to \eqref{12}, using the bound $x^{j}W^{(j)}(x)\ll_{j}x^{-2}$, we have
\begin{equation*}
x^{j}f^{(j)}(x)\ll_{j}(1+\sqrt{p}x)^{j}q^{-2}(cx)^{-4}.
\end{equation*}
Lemma 7 then gives
\begin{equation*}
y(n)\ll cq\rho\bigg(\frac{1+\sqrt{p}\rho}{1+\sqrt{n}\rho}\bigg)^{J}\frac{q^2}{X^2}.
\end{equation*}
For the range $n\leq\rho^{-\kappa}$, a better bound than \eqref{9} in this case is
\begin{equation*}
y(n)\ll\frac{\sqrt{p}X}{cq}(\log q)\frac{q^2}{X^2}.
\end{equation*}
Since $q^2/X^2\ll1$, all the previous estimates remain valid. The ony place where this is not the case is the sum over $n\ll(q^2/2\sqrt{X})^{\kappa}$ in \eqref{10}. However, this sum is void for $X>q^4/4$ and the former estimate still works in the larger interval $X\leq q^4/4$. Also, the quantity saved $q^2/X^2$ is sufficient to allow the sum over the dyadic values of $X$ involved to converge. The lemma follows.
\end{proof}

The proof of the theorem is complete.

\end{document}